\newcommand{\om}{\Omega}
\newcommand{\R}{\mathbb{R}}
\newcommand{\be}{\begin{eqnarray}}
\newcommand{\ee}{\end{eqnarray}}
\newcommand{\av}{-\hspace{-.15in}\int}
\newcommand{\avsmall}{-\hspace{-.112in}\int}
\renewcommand{\leq}{\leqslant}
\renewcommand{\geq}{\geqslant}
\newcommand{\cof}{{\rm cof}}
\newcommand{\1}{{\mathbf 1}}
\begin{document}
\title{Image comparison and scaling via nonlinear elasticity}
%
%
\author{John M. Ball
 \and
Christopher L. Horner
%
\authorrunning{J.M. Ball et al.}
%
\institute{ Heriot-Watt University and Maxwell Institute for the Mathematical Sciences, Edinburgh, U.K.}
\\
}

\maketitle              
\begin{abstract}
A nonlinear elasticity model for comparing images is formulated and analyzed, in which optimal transformations between images are sought as minimizers of an integral functional. The existence of minimizers in a suitable class of homeomorphisms between image domains is established under natural hypotheses. We investigate whether for linearly related images  the minimization algorithm delivers the linear transformation as the unique minimizer.

\keywords{Nonlinear elasticity  \and image registration \and scaling.}
\end{abstract}
\section{Introduction}
In this paper we formulate and analyze a nonlinear elasticity model for comparing two images $P_1=(\Omega_1,c_1),\; P_2=(\Omega_2,c_2)$, regarded as bounded Lipschitz domains $\om_1,\om_2$ in $\R^n$ with corresponding intensity maps $c_1:\om_1\to\R^m, c_2:\om_2\to\R^m$. The model is based on an integral functional
\be
\label{0} I_{P_1,P_2}(y)=\int_{\Omega_1}\psi(c_1(x), c_2(y(x)),Dy(x)) \,dx,
\ee
 depending on $c_1,c_2$ and  a map $y:\om_1\to\om_2$ with  gradient $Dy$, whose minimizers give  optimal transformations $y^*$ between images. The admissible transformations $y$ between the images are orientation-preserving homeomorphisms with $y(\om_1)=\om_2$, and are not required to satisfy other boundary conditions. 

The use of nonlinear elasticity, rather than models based on linear elasticity that are more commonly used in the computer vision literature, provides a conceptually clearer and more general framework. A key advantage is that nonlinear elasticity (of which linear elasticity is not a special case) respects rotational invariance, so that rigidly rotated and translated images can be identified as equivalent. Further, nonlinear elasticity is naturally suited for discussing the global invertibility of maps between images (see, for example, \cite{j16}, \cite{Sverak88}), which in the context of mechanics describes non-interpenetration of matter.  

Our work is closest in spirit to that of Droske \& Rumpf \cite{droskerumpf04}, Rumpf \cite{rumpf2013} and Rumpf \& Wirth \cite{rumpfwirth}, who like us make use of the existence theory for polyconvex energies in \cite{j8}, as also do Burger, Modersitski \& Ruthotto \cite{burgeretal2013}, Debroux et al \cite{debrouxetal20}, Iglesias, Rumpf \& Scherzer \cite{iglesiasrumpfscherzer} and Iglesias \cite{iglesias21}. Other nonlinear elasticity approaches are due to  Lin, Dinov, Toga \& Vese \cite{linetal2010},  Ozer\'{e}, Gout \& Le Guyader \cite{OzereLeguyader2015},  Ozer\'{e} \& Le Guyader \cite{OzereLeguyader2015a}, Simon, Sheorey, Jacobs \& Basri \cite{simonetal2017} and  Debroux \& Le Guyader\cite{debroux}.  Key differences with these works are:\\ (i) that we minimize among homeomorphisms of the image domains rather than applying Dirichlet or other boundary conditions,\\ (ii) technical improvements as regards the regularity of the intensity maps, and\\ (iii) a novel analysis of linearly related images.

Our model is described in Section \ref{nle}, in which it is shown (Proposition \ref{prop1}) that invariance of the integral \eqref{0} under rotation and translation requires that the integrand $\psi(c_1,c_2,\cdot)$ be isotropic.  As described above, two images that are translated and rigidly rotated with respect to each other can reasonably be regarded as equivalent. In most applications the minimization algorithm should thus deliver this translation and rotation as the unique minimizer, and we give conditions on $\psi$ under which this occurs.  We also discuss symmetry with respect to interchange of images. Theorem \ref{exthm} gives the existence of  a minimizer  for general pairs of images under polyconvexity and growth conditions on $\psi$, assuming only that the intensity maps are $L^\infty$.

More generally we consider the case when two images are related by a linear transformation, and ask for which $\psi$ the minimization algorithm delivers this linear transformation as the unique minimizer. We show  (see Section \ref{scaling}) that  $\psi$  can be chosen such that for any pair of images related by a uniform magnification the unique minimizer is that magnification. However, for the functional to deliver as a minimizer the linear transformation between {\it any} linearly related pair of images the integrand must have a special form (see Theorem \ref{generalM}), in which   the integrand depends on the gradient $Dy$  as a convex function of $\det Dy$ alone. This degeneracy suggests that a better model might use an integrand depending also on the second gradient $D^2y$, and this is briefly discussed, together with other issues, in Section \ref{discussion}.

\section{Nonlinear elasticity model}
\label{nle}
\subsection{Comparing images}
We identify an image with a pair $P=(\Omega,c)$, where $\Omega\subset\R^n$ is a bounded Lipschitz domain, and $c:\Omega\to\R^m$ is an {\it intensity map} describing the greyscale intensity ($m=1$),  the intensity of colour channels ($m>1$), and possibly other image characteristics. Our aim is to compare two images 
$P_1=(\Omega_1,c_1),\; P_2=(\Omega_2,c_2)$ 
by means of a nonlinear elasticity based functional, whose minimizers give  optimal transformation maps between the images.

To compare $P_1, P_2$ we minimize the functional
\be
\label{1}I_{P_1,P_2}(y)=\int_{\Omega_1}\psi(c_1(x), c_2(y(x)),Dy(x)) \,dx,
\ee
over invertible maps $y:\Omega_1\to\R^n$ such that $y(\Omega_1)=\Omega_2$, and which are {\it orientation-preserving},  that is $\det Dy(x)>0$ for a.e. $x\in\om_1$. 
Here 
$$\psi:\R^m\times\R^m\times M^{n\times n}_+\to [0,\infty),$$
where
$M^{n\times n}_+=GL^+(n,\R)=\{{\rm real}\; n\times n \,{\rm matrices}\; A\;{\rm with}\;\det A>0\}$.

In \eqref{1}, $Dy(x)$ denotes the distributional gradient of $y$ at $x$. Throughout this section we assume that the maps $y$ and their inverses $y^{-1}$ have sufficient regularity; it is enough that $y\in W^{1,p}(\Omega_1, \R^n), y^{-1}\in W^{1,p}(\Omega_2,\R^n)$ for $p>n$ (for the definition of $W^{1,p}$ see Section \ref{existence}), which is guaranteed by Theorem \ref{exthm} below.

Note that we do not specify $y$ on $\partial \Omega_1$, only that $y(\Omega_1)=\Omega_2$. Thus we allow `sliding at the boundary', in order to better compare images with important boundary features. This is not typically done in the computer vision literature, but is considered in the context of elasticity by Iwaniec \& Onninen \cite{iwanieconninen2009}, though for elasticity such a boundary condition would be difficult to realize mechanically.

\subsection{Properties of $\psi$}
\label{psiprops}
We now list some desirable properties of the integrand $\psi$ in \eqref{1}.\vspace{.05in}\\ 
(i){ \it Invariance under rotation and translation.}
For two images $P=(\Omega,c)$ and $P'=(\Omega',c')$ write $P\sim P'$ if $P,P'$ are related by a rigid translation and rotation, i.e.
$$\Omega'=E(\Omega),\; c'(E(x))=c(x)$$
for some proper rigid   transformation $E(x)=a+Rx$, $a\in\R^n$, $R\in SO(n)$.

If $P_1\sim P_1'$, $P_2\sim P_2'$, with corresponding rigid transformations $E_1(x)=a_1+R_1x,\, E_2(x)=a_2 +R_2x$,
we require that 
\be
\label{1a} 
I_{P_1,P_2}(y)=I_{P_1',P_2'}(E_2\circ y\circ E_1^{-1}),
\ee
or, equivalently,
\be
\label{2}
\int_{\Omega_1}\psi(c_1(x),c_2(y(x)), R_2Dy(x)R_1^T)\,dx&\nonumber\\
&\displaystyle\hspace{-.5in}=\int_{\Omega_1}\psi(c_1(x),c_2(y(x)),Dy(x))\,dx.
\ee
\begin{proposition}
\label{prop1}
\eqref{2} holds for all $P_1,P_2$ and orientation-preserving invertible $y:\Omega_1\to\Omega_2$ with $y(\Omega_1)=\Omega_2$ iff $\psi(c_1,c_2,\cdot)$ is {\it isotropic}, i.e.
\be
\label{3}
 \psi(c_1,c_2,QAR)=\psi(c_1,c_2,A)
 \ee
   for all  $c_1,c_2\in \R^n, A\in M^{n\times n}_+$, and $Q,R \in SO(n)$.
\end{proposition}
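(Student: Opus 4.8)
The plan is to prove the two implications separately, with the ``if'' direction being essentially immediate and the ``only if'' direction requiring a localization argument to strip away the integral.

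For the ``if'' direction, suppose $\psi(c_1,c_2,\cdot)$ is isotropic. Then for any $R_1,R_2\in SO(n)$ and any $A\in M^{n\times n}_+$ we have $\psi(c_1,c_2,R_2AR_1^T)=\psi(c_1,c_2,A)$ directly from \eqref{3} (taking $Q=R_2$ and $R=R_1^T$, noting $R_1^T\in SO(n)$). Applying this pointwise with $A=Dy(x)$, $c_1=c_1(x)$, $c_2=c_2(y(x))$, the integrands on the two sides of \eqref{2} agree for a.e.\ $x\in\Omega_1$, so \eqref{2} holds.

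For the ``only if'' direction, assume \eqref{2} holds for all admissible data. The key step is to choose, for given $c_1,c_2\in\R^n$, $A\in M^{n\times n}_+$, and $Q,R\in SO(n)$, a convenient special case that forces \eqref{3} at a single point. First I would reduce to affine maps: take $\Omega_1$ to be a small ball (or cube), let $y(x)=Ax+b$ with $b$ chosen so that $y(\Omega_1)=\Omega_2:=A\Omega_1+b$ is a valid image domain, and take $c_1,c_2$ to be constant intensity maps with the prescribed values. Then $Dy\equiv A$ on $\Omega_1$, and \eqref{2} with $R_1=R^T$, $R_2=Q$ reduces to $\psi(c_1,c_2,QAR)|\Omega_1| = \psi(c_1,c_2,A)|\Omega_1|$, giving \eqref{3} for this particular $A,Q,R$. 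Since $A,Q,R,c_1,c_2$ were arbitrary, isotropy follows. The only thing to check is that such an affine $y$ is admissible in the class used in Proposition \ref{prop1}: it is an orientation-preserving ($\det A>0$) homeomorphism from $\Omega_1$ onto $\Omega_2$, and constant intensity maps are certainly allowed, so there is no difficulty.

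The main (minor) obstacle is purely bookkeeping: making sure the class of test pairs $(P_1,P_2)$ and test maps $y$ over which \eqref{2} is assumed to hold is rich enough to include these constant-intensity, affine-map configurations — which it is, since $\psi$ is defined for all arguments and \eqref{2} is required ``for all $P_1,P_2$'' — and matching up the rotations: the statement of \eqref{2} carries $R_2 D y R_1^T$, so one recovers the general form $QAR$ of \eqref{3} by setting $R_2=Q$ and $R_1=R^T$, using that $SO(n)$ is closed under transposition. No real analytic difficulty arises because the reduction to affine maps removes all dependence on $x$ and turns the integral identity into an algebraic one.
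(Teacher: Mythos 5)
Your proposal is correct and follows essentially the same route as the paper: the ``if'' direction is immediate pointwise, and the ``only if'' direction is obtained by testing \eqref{2} with constant intensity maps and an affine map $y(x)=Ax+b$, which collapses the integral identity to the algebraic identity \eqref{3}. Your additional bookkeeping (checking admissibility of the affine map and matching $R_2=Q$, $R_1=R^T$) is a harmless elaboration of the paper's one-line argument.
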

\begin{proof}
Setting $c_1, c_2$ constant, and $y(x)=Ax$, \eqref{2} implies \eqref{3}, and the converse is obvious.
\end{proof}
We denote by $v_i(A)$  the singular values of $A$ (that is, the eigenvalues of $\sqrt {A^TA}$). A standard result of nonlinear elasticity (see, for example, \cite[Theorem 8.5.1]{silhavy97a}) gives that $\psi(c_1,c_2,\cdot)$ is isotropic iff
$$\psi(c_1,c_2, A)=H(c_1,c_2,v_1(A),...,v_n(A))$$
with $H$ symmetric with respect to permutations of the last $n$ arguments.\vspace{.05in}

\noindent (ii) {\it Matching of equivalent images}. We also require that the functional \eqref{1} is zero iff the two images are related by a rigid transformation, i.e. for invertible $y$ with $y(\Omega_1)=\Omega_2$ we have
\be
\label{4}
 I_{P_1,P_2}(y)=0  \text{ iff } P_1\sim P_2 \text{ with corresponding rigid transformation } y.
 \ee
\begin{proposition}
\eqref{4} is equivalent to the condition
\be
\label{5}
\psi(c_1,c_2,A)=0 \text{ iff }c_1=c_2\text{ and }A\in SO(n).
\ee
\end{proposition}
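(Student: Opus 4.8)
The plan is to prove the two implications \eqref{5}$\Rightarrow$\eqref{4} and \eqref{4}$\Rightarrow$\eqref{5} separately; each reduces, after substituting into \eqref{1} and using $\psi\geq 0$, to the classical rigidity (Liouville) theorem, which asserts that a map $y\in W^{1,p}(\Omega_1,\R^n)$ with $p>n$ on the connected domain $\Omega_1$ satisfying $Dy(x)\in SO(n)$ for a.e. $x$ is a single rigid motion $y(x)=a+Rx$ with $R\in SO(n)$. For \eqref{5}$\Rightarrow$\eqref{4}: if $P_1\sim P_2$ with corresponding rigid transformation $y(x)=a+Rx$, then $c_2(y(x))=c_1(x)$ and $Dy\equiv R$, so the integrand in \eqref{1} equals $\psi(c_1(x),c_1(x),R)$, which vanishes by \eqref{5}, giving $I_{P_1,P_2}(y)=0$. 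Conversely, if $I_{P_1,P_2}(y)=0$ then, since $\psi\geq 0$, the integrand vanishes for a.e. $x$, so by \eqref{5} we have $c_1(x)=c_2(y(x))$ and $Dy(x)\in SO(n)$ a.e.; the rigidity theorem then makes $y$ a fixed rigid motion $E$, and the relations $\Omega_2=E(\Omega_1)$ and $c_2(E(x))=c_1(x)$ are precisely the statement that $P_1\sim P_2$ with corresponding rigid transformation $y$.

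For \eqref{4}$\Rightarrow$\eqref{5} I would test \eqref{4} on images with constant intensities and affine transformations of constant gradient. Given $A\in M^{n\times n}_+$ and $c_1,c_2\in\R^m$, take $\Omega_1$ to be a ball, set $y(x)=Ax$ and $\Omega_2:=A(\Omega_1)$ (again a bounded Lipschitz domain, since $A$ is a linear isomorphism), and let the intensity maps be the constants $c_1$ on $\Omega_1$ and $c_2$ on $\Omega_2$. Then $y$ is an orientation-preserving homeomorphism with $y(\Omega_1)=\Omega_2$ and $I_{P_1,P_2}(y)=\psi(c_1,c_2,A)\,|\Omega_1|$. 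If $\psi(c_1,c_2,A)=0$, then $I_{P_1,P_2}(y)=0$, so by \eqref{4} the linear map $y$ must be a proper rigid transformation with $c_2(y(x))=c_1(x)$; comparing gradients gives $A\in SO(n)$, and constancy of the intensities gives $c_1=c_2$. Conversely, if $c_1=c_2$ and $A=R\in SO(n)$, the same construction makes $P_1\sim P_2$ with corresponding rigid transformation $y(x)=Rx$, so \eqref{4} forces $\psi(c_1,c_1,R)\,|\Omega_1|=I_{P_1,P_2}(y)=0$, i.e. $\psi(c_1,c_1,R)=0$.

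The only non-routine ingredient, and the step I would be most careful about, is the rigidity argument used in \eqref{5}$\Rightarrow$\eqref{4}: passing from the pointwise constraint $Dy(x)\in SO(n)$ a.e. to the conclusion that $y$ is one and the same rigid motion throughout $\Omega_1$. This is where connectedness of $\Omega_1$ and the Sobolev regularity of $y$ (available from Theorem \ref{exthm}) are needed; everything else is direct substitution into \eqref{1} and use of the sign of $\psi$.
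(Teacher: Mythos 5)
Your proof is correct and follows essentially the same route as the paper: the one nontrivial step in both is the rigidity (Liouville/Reshetnyak) theorem upgrading $Dy(x)\in SO(n)$ a.e.\ to a single rigid motion, and the remaining implications are handled, as you do, by direct substitution and by testing with constant intensities and affine maps. You simply spell out the easy directions that the paper dismisses as trivial.
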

\begin{proof}
The  only nontrivial part of the proof of equivalence is to show that if \eqref{5} holds and $I_{P_1,P_2}(y)=0$ then $P_1\sim P_2$ with corresponding rigid transformation $y$. But if $I_{P_1,P_2}(y)=0$ then $c_2(y(x))=c_1(x)$ and $Dy(x)=R(x)\in SO(n)$ for a.e. $x\in\Omega_1$. But this implies by \cite{reshetnyak67} that $R(x)=R$ is constant, from which the conclusion follows.
\end{proof}

\noindent(iii) {\it Symmetry with respect to interchanging images.}
For applications in which both images are of the same type (but not, for example, when $P_1$ is a template image) it is reasonable to require that 
\be
\label{6}
I_{P_1,P_2}(y)=I_{P_2,P_1}(y^{-1}).
\ee
Equivalently
\be
\label{7}
\int_{\Omega_1}\psi(c_1(x),c_2(y(x)),Dy(x))\,dx
&=&\int_{\Omega_2}\psi(c_2(y),c_1(x(y)),Dx(y))\,dy\\
&=&\int_{\Omega_1}\psi(c_2(y(x)),c_1(x), Dy(x)^{-1})\det Dy(x)\,dx.\nonumber
\ee
Taking $c_1,c_2$ constant and $y(x)=Ax$ this holds iff
\be
\label{8}\psi(c_1,c_2,A)=\psi(c_2,c_1,A^{-1})\det A.
\ee
Such a symmetry condition was introduced by Cachier \& Rey \cite{cachierrey2000} and subsequently used by Kolouri, Slep\v{c}ev \& Rohde \cite{kolourietal2015} and Iglesias \cite{iglesias21}. It is also implicit in the work of Iwaniec \& Onninen \cite{iwanieconninen2009}.

A class of integrands $\psi$ satisfying the above conditions \eqref{3}, \eqref{5}, \eqref{8} is given by
\be
\label{8aa}
\psi(c_1,c_2,A)=\Psi(A)+f(c_1,c_2,\det A),
\ee
where\\ (a) $\Psi\geq 0$ is isotropic, $\Psi(A)=\det A \cdot\Psi(A^{-1})$, $\Psi^{-1}(0)=SO(n)$, \\(b) $f\geq 0$, $f(c_1,c_2,\delta)=\delta f(c_2,c_1,\delta^{-1})$, $f(c_1,c_2,1)=0$ iff $c_1=c_2$.

In particular we can take
\be 
\label{8a}f(c_1,c_2,\delta)=(1+\delta)|c_1-c_2|^2,
\ee
or
\be
\label{8b}
f(c_1,c_2,\delta)= |c_1-c_2\delta|^2+\delta^{-1}|c_1\delta-c_2|^2,
\ee
which are both convex in $\delta$.
\subsection{Existence of minimizers}
\label{existence}
  Let $p>n$ and define the set of admissible maps
\be
{\mathcal A}=\{y\in W^{1,p}(\Omega_1,\R^n): y:\Omega_1\to \Omega_2 \text{ an orientation-preserving}&\\ &\hspace{-1.5in}\text{homeomorphism},  y^{-1}\in W^{1,p}(\Omega_2,\R^n)\}.\nonumber
\ee
Here, for a bounded domain $\om\subset\R^n$ and $1<p<\infty$, $W^{1,p}(\Omega,\R^n)$ is the Sobolev space of maps $y:\om\to\R^n$ such that
$$\|y\|_{1,p}:=\left(\int_\om\left(|y(x)|^p+|Dy(x)|^p\right)\,dx\right)^\frac{1}{p}<\infty.$$
 We recall (see for example \cite{adamsfournier03,mazya2011}) that if $\om$ is Lipschitz and $p>n$ then any $y\in W^{1,p}(\Omega,\R^n)$ has a representative that is continuous on the closure $\bar\om$ of $\om$.

We now make some other technical hypotheses on $\psi$.\vspace{.05in}\\
(H1) ({\it Continuity})  $\psi:\R^m\times \R^m\times M^{n\times n}_+\to [0,\infty)$ is continuous,\vspace{.05in}\\
(H2) ({\it Coercivity})  $\psi(c,d,A)\geq C(|A|^p+\det A\cdot|A^{-1}|^p) -C_0$\\ for all $c,d\in\R^m, A\in M^{n\times n}_+$, where $C>0$ and $C_0$ are constants,\vspace{.05in}\\
(H3) ({\it Polyconvexity}) $\psi(c,d,\cdot)$ is polyconvex for each $c,d\in\R^s$, i.e. there is a function $g:\R^m\times\R^m\times \R^{\sigma(n)}\times (0,\infty)\to \R$ with $g(c,d,\cdot)$ convex, such that  
$$\psi(c,d,A)=g(c,d,{\mathbf J}_{n-1}(A), \det A)\text{ for all }c,d\in\R^m, A\in M^{n\times n}_+,$$ where ${\mathbf J}_{n-1}(A)$ is the list of all minors (i.e. subdeterminants) of $A$ of order $\leq n-1$ and $\sigma(n)$ is the number of such minors,\vspace{.05in}\\
(H4) ({\it Bounded intensities})  $c_1\in L^\infty(\Omega_1,\R^s)$, $c_2\in L^\infty(\Omega_2,\R^s)$.\vspace{.05in}

We note that (H2) implies that $\psi(c,d,A)\geq C_1 (\det A)^{1-\frac{p}{n}}$ for some constant $C_1>0$, so that $\psi(c,d,A)\to\infty$ as $\det A\to 0+$. This follows from the Hadamard inequality $|B|^n\geq n^\frac{n}{2}\det B$ for $B\in M^{n\times n}_+$ applied to $B=\cof A$, noting that $\det \cof A=(\det A)^{n-1}$.

\begin{theorem}
\label{exthm}Suppose that $\mathcal A$ is nonempty, and that the hypotheses {\rm (H1)-(H4)} hold. Then there exists an absolute minimizer $y^*$ in $\mathcal A$ of
$$I_{P_1,P_2}(y)=\int_{\Omega_1}\psi(c_1(x), c_2(y(x)),Dy(x)) \,dx.$$
\end{theorem}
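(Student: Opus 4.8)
The plan is to use the direct method of the calculus of variations. Since $\psi\geq 0$ the infimum $m:=\inf_{y\in\mathcal A}I_{P_1,P_2}(y)$ lies in $[0,\infty]$; if $m=\infty$ then every element of the (nonempty) set $\mathcal A$ is a minimizer, so we may assume $m<\infty$ and take a minimizing sequence $y_k\in\mathcal A$ with $I_{P_1,P_2}(y_k)\leq m+1$. Because $y_k,y_k^{-1}\in W^{1,p}$ with $p>n$, the change of variables $z=y_k(x)$ is legitimate and $D(y_k^{-1})(y_k(x))=(Dy_k(x))^{-1}$ a.e., so (H2) yields
\[
\int_{\Omega_1}|Dy_k|^p\,dx+\int_{\Omega_2}|D(y_k^{-1})|^p\,dz\ \leq\ C^{-1}\big(m+1+C_0|\Omega_1|\big).
\]
As $y_k(\Omega_1)=\Omega_2$ and $y_k^{-1}(\Omega_2)=\Omega_1$ are bounded, $\{y_k\}$ is bounded in $W^{1,p}(\Omega_1,\R^n)$ and $\{y_k^{-1}\}$ in $W^{1,p}(\Omega_2,\R^n)$. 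Passing to a subsequence (not relabelled), $y_k\weak y$ in $W^{1,p}(\Omega_1,\R^n)$, $y_k^{-1}\weak w$ in $W^{1,p}(\Omega_2,\R^n)$, and by the compact Sobolev embedding ($p>n$) $y_k\to y$ uniformly on $\bar\Omega_1$ and $y_k^{-1}\to w$ uniformly on $\bar\Omega_2$.

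The next step is to check that $y\in\mathcal A$. From the uniform convergences, $w(y(x))=\lim_k y_k^{-1}(y_k(x))=x$ on $\bar\Omega_1$ and $y(w(z))=z$ on $\bar\Omega_2$, so $y$ is a homeomorphism of $\bar\Omega_1$ onto $\bar\Omega_2$ with inverse $w$. Since $y|_{\Omega_1}$ is continuous and injective, invariance of domain shows $y(\Omega_1)$ is open, hence $y(\Omega_1)\subseteq\Omega_2$ (a Lipschitz domain equals the interior of its closure); symmetrically $w(\Omega_2)\subseteq\Omega_1$, and then $\Omega_2=y(w(\Omega_2))\subseteq y(\Omega_1)$, giving $y(\Omega_1)=\Omega_2$ and $y^{-1}=w\in W^{1,p}(\Omega_2,\R^n)$. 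For $\det Dy>0$ a.e., note that (H2) and the Hadamard inequality give $(\det Dy_k)^{1-p/n}\leq C_2\,\det Dy_k\,|Dy_k^{-1}|^p$, so $\{(\det Dy_k)^{1-p/n}\}$ is bounded in $L^1(\Omega_1)$; by the weak continuity of the Jacobian under weak $W^{1,p}$-convergence with $p>n$ (\cite{j8}), $\det Dy_k\weak\det Dy$, and since $\delta\mapsto\delta^{1-p/n}$ (extended by $+\infty$ for $\delta\leq 0$) is convex and lower semicontinuous, $\int_{\Omega_1}(\det Dy)^{1-p/n}\,dx\leq\liminf_k\int_{\Omega_1}(\det Dy_k)^{1-p/n}\,dx<\infty$, forcing $\det Dy>0$ a.e. Hence $y\in\mathcal A$.

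The step I expect to be the main obstacle is the passage to the limit in the intensity term, because $c_2$ is only assumed to lie in $L^\infty(\Omega_2)$, so $c_2\circ y_k$ need not converge pointwise. Here I would show $c_2\circ y_k\to c_2\circ y$ in $L^1(\Omega_1)$: for continuous $\phi:\bar\Omega_2\to\R^m$, changing variables and applying H\"older's inequality with exponents $p/n$ and $p/(p-n)$,
\begin{align*}
\int_{\Omega_1}|c_2(y_k(x))-\phi(y_k(x))|\,dx
&=\int_{\Omega_2}|c_2(z)-\phi(z)|\,\det D(y_k^{-1})(z)\,dz\\
&\leq\|c_2-\phi\|_{L^{p/(p-n)}(\Omega_2)}\,\|\det D(y_k^{-1})\|_{L^{p/n}(\Omega_2)},
\end{align*}
and $\|\det D(y_k^{-1})\|_{L^{p/n}(\Omega_2)}\leq n^{-1/2}\|D(y_k^{-1})\|_{L^p(\Omega_2)}^n$ is bounded by the Hadamard inequality (and the same bound holds with $y$ in place of $y_k$, since $\|Dw\|_{L^p}\leq\liminf_k\|D(y_k^{-1})\|_{L^p}$). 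Given $\eps>0$, choose $\phi$ with $\|c_2-\phi\|_{L^{p/(p-n)}(\Omega_2)}<\eps$ — possible since $c_2\in L^\infty(\Omega_2)\subseteq L^{p/(p-n)}(\Omega_2)$ and continuous functions are dense there — and combine with $\phi\circ y_k\to\phi\circ y$ uniformly (uniform continuity of $\phi$) to obtain $\limsup_k\int_{\Omega_1}|c_2\circ y_k-c_2\circ y|\,dx\leq 2C\eps$; letting $\eps\to 0$ proves the claim. Passing to a further subsequence, $c_2(y_k(x))\to c_2(y(x))$ for a.e. $x\in\Omega_1$.

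Finally, writing $\psi(c,d,A)=g(c,d,\mathbf{J}_{n-1}(A),\det A)$ with $g\geq 0$ continuous and $g(c,d,\cdot)$ convex (H3), I would conclude by lower semicontinuity. Along the chosen subsequence $c_1$ is fixed in $L^\infty(\Omega_1)$, $c_2\circ y_k\to c_2\circ y$ a.e., and by the weak continuity of minors for $p>n$ (\cite{j8}) $\mathbf{J}_{n-1}(Dy_k)\weak\mathbf{J}_{n-1}(Dy)$ and $\det Dy_k\weak\det Dy$ in suitable $L^q$ with $q>1$. The lower semicontinuity theorem for polyconvex integrands (\cite{j8}) then applies; the only point to check beyond its standard form is that the continuous parameters $(c_1(x),c_2(y_k(x)))$ converge merely a.e. rather than uniformly, which is dealt with by Egorov's theorem (reducing to uniform convergence of $c_2\circ y_k$ on a set of almost full measure) together with $g\geq 0$. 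This gives $m=\lim_k I_{P_1,P_2}(y_k)\geq I_{P_1,P_2}(y)$, and since $y\in\mathcal A$ also $I_{P_1,P_2}(y)\geq m$, so $y^*:=y$ is an absolute minimizer. The two genuinely nontrivial ingredients are thus the $L^1$-convergence of $c_2\circ y_k$ under the sole assumption $c_2\in L^\infty$ (which is exactly what allows (H4) to be stated with $L^\infty$ rather than continuous intensities) and the lower semicontinuity with an only-a.e.-convergent intensity parameter; the verification that $y\in\mathcal A$ is routine once the properties of $W^{1,p}$ homeomorphisms with $p>n$ are invoked.
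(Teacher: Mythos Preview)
Your proposal is correct and follows essentially the same route the paper itself describes: the direct method for polyconvex energies as in \cite{j8}, with the one genuinely new ingredient being the treatment of the composition $c_2\circ y_k$ when $c_2$ is merely $L^\infty$. The paper does not give a full proof here (it is deferred to \cite{ballhornermaths}) but singles out exactly the same difficulty, and your density/change-of-variables argument for $c_2\circ y_k\to c_2\circ y$ in $L^1$ together with Egorov for the lower semicontinuity step is precisely the kind of device one expects; the remaining verification that the limit lies in $\mathcal A$ is, as you say, routine for $p>n$. (One cosmetic point: the constant in your Hadamard bound should be $n^{-n/2}$ rather than $n^{-1/2}$, and it is worth remarking explicitly that the same change-of-variables estimate shows $c_2\circ y$ is measurable, which is what makes $I_{P_1,P_2}(y)$ well defined in the first place --- the paper flags this as a point needing care.)
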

The proof, which will appear in \cite{ballhornermaths},  follows the  usual pattern for proving existence of minimizers in nonlinear elasticity for a polyconvex stored-energy function using the direct method of the calculus of variations (see \cite{j8,ciarlet83,silhavy97a}). However there are some extra issues. In particular, as observed by Rumpf \cite{rumpf2013} care has to be taken for intensity maps $c_1,c_2$ that are discontinuous, for which it is not even immediately obvious that  $I_{P_1,P_2}(y)$ is well defined, and we are able to weaken his hypotheses by assuming only (H4). Note that the hypotheses on $\psi$ discussed in Section \ref{psiprops} are not needed to prove the existence of minimizers.

\section{Linear scaling}
\label{scaling}
Suppose that the images $P_1=(\Omega_1,c_1)$ and $P_2=(\Omega_2,c_2)$ are linearly related, i.e. for some   $M\in M^{n\times n}_+$ we have
\be
\label{lin}\Omega_2=M\Omega_1,\;\; c_2(Mx)=c_1(x).
\ee
Can we choose $\psi$ such that the unique minimizer $y$ of $I_{P_1,P_2}$ is $y(x)=Mx$?

For simplicity consider  $\psi$ of the form \eqref{8aa}, \eqref{8a}
$$\psi(c_1,c_2,A)=\Psi(A)+(1+\det A)|c_1-c_2|^2.$$
Thus we require that  for all orientation-preserving invertible  $y$ with $y(\Omega_1)=M\Omega_1$
\be
\int_{\Omega_1}\left(\Psi(Dy(x))+(1+\det Dy(x))|c_1(x)-c_2(y(x))|^2\right)\,dx \geq \displaystyle\int_{\Omega_1}\Psi(M)\,dx,\;\;
\ee
with equality iff $y(x)=Mx$.

This holds for all $c_1, c_2$ iff
\be
\label{qc}\av_{\Omega_1}\Psi(Dy)\,dx\geq \Psi(M)
\ee
for $y$ invertible with $y(\Omega_1)=M\Omega_1$, where $\avsmall_{\Omega_1} f\,dx:=\frac{1}{|\om_1|}\int_{\om_1}f\,dx$ and  $|\om_1|$ is the $n-$dimensional Lebesgue measure of $\om_1$. The inequality \eqref{qc} is a stronger version of {\it quasiconvexity at }$M$, the central convexity condition of the multi-dimensional calculus of variations  implied by polyconvexity (see, e.g. \cite{rindler2018}), in which the usual requirement that $y(x)=Mx$ for $x\in\partial\Omega_1$ is weakened.

We show that we can satisfy this condition if $M=\lambda \1$, $\lambda>0$, where $\1$ denotes the identity matrix (or more generally if $M=\lambda R$, $R\in SO(n)$), so that $P_2$ is a uniform magnification (or reduction if $\lambda\leq 1$) of $P_1$. Let 
\be\Psi(A)= \sum_{i=1}^nv_i^\alpha  +
(\det A)\sum_{i=1}^nv_i^{-\alpha}+h(\det A),
\ee
where $v_i=v_i(A)$ are the singular values of $A$, $\alpha>n$, and where $h:(0,\infty)\to\R$ is $C^1$, convex and bounded below with $h(\delta)=\delta h(\delta^{-1})$ and $h'(1)=-n$.
Then $\Psi$ is isotropic, $\Psi(A)=\det A\cdot \Psi(A^{-1})$,
$\Psi\geq 0$, $\Psi^{-1}(0)=SO(n)$, and $\psi$ satisfies (H1)-(H4).

Let $y$ be invertible with $y(\Omega_1)=\lambda\Omega_1$. By the arithmetic mean -- geometric mean inequality we have that, since $\det Dy=\prod_{i=1}^nv_i$,
\begin{eqnarray*}
\av_{\om_1}\Psi(Dy)\,dx&\geq& \av_{\Omega_1}n\left((\det Dy)^{\frac{\alpha}{n}}+(\det Dy)^{1-\frac{\alpha}{n}}\right)+h(\det Dy)\,dx\\
&=& \av_{\Omega_1}H(\det Dy(x))\,dx\\
&\geq & H\left(\av_{\Omega_1} \det Dy(x)\,dx\right)\\
&=&H(\lambda^n)= \Psi(\lambda{\bf 1}),
\end{eqnarray*}
as required, where we have set $H(\delta):=n(\delta^\frac{\alpha}{n}+\delta^{1-\frac{\alpha}{n}})+h(\delta)$ and used Jensen's inequality, noting that $H$ is convex and that $\int_{\om_1}\det Dy(x)\,dx$ is the $n$-dimensional measure of $y(\om_1)$.

We have equality only when each $v_i=\lambda$, i.e. $Dy(x)=\lambda R(x)$ for $R(x)\in SO(n)$, which implies that $R(x)=R$ is constant and $a+\lambda R\Omega_1=\lambda\Omega_1$, for some $a\in\R^n$, which for generic $\Omega_1$ implies $a=0$ and $R=\1$, hence $y(x)=\lambda x$.

However, for \eqref{qc} to hold for general $M$ implies that $\Psi$ has a special form:
\begin{theorem}[{\rm see \cite{ballhornermaths}}]
\label{generalM}   $$\av_{\Omega_1}\Psi(Dy)\,dx\geq \Psi(M)$$
for all orientation-preserving invertible $y$  with $y(\Omega_1)=M\Omega_1$, and   for every $\Omega_1$ and $M\in M^{n\times n}_+$,  iff
$$\Psi(A)=H(\det A)$$
for some convex $H$.
\end{theorem}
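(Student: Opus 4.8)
The plan is to prove both directions. The ``if'' direction is the easy one: if $\Psi(A)=H(\det A)$ with $H$ convex, then for any orientation-preserving invertible $y$ with $y(\Omega_1)=M\Omega_1$, Jensen's inequality gives $\av_{\Omega_1}\Psi(Dy)\,dx=\av_{\Omega_1}H(\det Dy)\,dx\geq H(\av_{\Omega_1}\det Dy\,dx)=H(|M\Omega_1|/|\Omega_1|)=H(\det M)=\Psi(M)$, using that $\int_{\Omega_1}\det Dy\,dx=|y(\Omega_1)|=|M\Omega_1|=\det M\cdot|\Omega_1|$ (valid by the change of variables / degree formula for Sobolev homeomorphisms, which holds here since $y\in\mathcal A$).

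For the ``only if'' direction I would extract pointwise convexity-type information from the inequality by choosing simple competitors. First, fix $M$ and $\Omega_1$ and perturb $y$ by interior oscillations that preserve the image set. The natural device is a two-gradient Young measure: given $A,B\in M^{n\times n}_+$ with $A-B$ of rank one and $\det A=\det B=\det M$, one can build, on a suitable $\Omega_1$, a sequence of orientation-preserving homeomorphisms $y_k$ with $y_k(\Omega_1)=M\Omega_1$, $Dy_k$ taking values near $\{A,B\}$ with prescribed volume fractions $\lambda,1-\lambda$ (laminates), and $y_k\to$ affine. Since the boundary is free, one has much more freedom than in classical quasiconvexity: in particular the competitor need not equal $Mx$ on $\partial\Omega_1$, only map onto $M\Omega_1$. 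Passing to the limit in $\av_{\Omega_1}\Psi(Dy_k)\,dx\geq\Psi(M)$ forces, after also optimizing over which affine map $M$ with $\det M=\delta$ one uses, that $\lambda\Psi(A)+(1-\lambda)\Psi(B)\geq\min\{\Psi(C):\det C=\delta\}$ whenever $\det A=\det B=\delta$ and $A-B$ has rank one. Iterating the lamination (building higher-order laminates whose barycentre is any matrix $M$ with $\det M=\delta$) should upgrade this to: $\Psi$ restricted to each level set $\{\det=\delta\}$ is constant, equal to its infimum there. Call this common value $H(\delta)$.

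Then I would show $H$ is convex. This uses a complementary family of competitors that genuinely change the determinant: decompose $\Omega_1$ into two subdomains on which $Dy$ is (approximately) $\delta_1 R_1$ and $\delta_2 R_2$ times appropriate matrices realizing the two determinant values, arranged so that $y$ is still an orientation-preserving homeomorphism onto $M\Omega_1$ with $\det M=\mu\delta_1+(1-\mu)\delta_2$ and volume fractions $\mu,1-\mu$. (Here one exploits that nothing constrains the boundary values, so a piecewise construction with matching along the interface is available; a clean model is $\Omega_1$ a cube split into two slabs, with $y$ affine on each slab and $M$ the average gradient, then mapped onto $M\Omega_1$ — one has to check injectivity and that the image is exactly $M\Omega_1$, which is where care is needed.) Feeding this into the inequality and using the first step ($\Psi=H\circ\det$ on each slab) yields $\mu H(\delta_1)+(1-\mu)H(\delta_2)\geq H(\mu\delta_1+(1-\mu)\delta_2)$, i.e. convexity of $H$. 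Conversely $\Psi(M)=H(\det M)$ for all $M$ is exactly the statement that the infimum on each level set is attained everywhere, which the first step already gave.

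The main obstacle will be the construction of the admissible competitors: one must produce, for prescribed gradient distributions, genuine orientation-preserving \emph{homeomorphisms} $y_k:\Omega_1\to M\Omega_1$ (not merely Sobolev maps attaining a Young measure), with the image set controlled \emph{exactly}, and with $y_k^{-1}\in W^{1,p}$ so that $y_k\in\mathcal A$. Classical laminate constructions give gradient oscillation but need to be localized and smoothed (e.g.\ via the piecewise-affine / convex-integration-with-injectivity arguments, or the in-approximation results for homeomorphisms) while keeping the map a homeomorphism onto the right target; the free-boundary feature helps (no boundary matching is required) but means one must instead arrange the correct image set by composing with a controlled diffeomorphism near $\partial\Omega_1$. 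Verifying that this correction does not spoil the energy estimate in the limit is the delicate point. Everything else — Jensen, the change-of-variables identity, and the convexity bookkeeping — is routine.
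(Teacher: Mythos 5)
Your ``if'' direction coincides with the paper's (Jensen's inequality together with $\int_{\Omega_1}\det Dy\,dx=|M\Omega_1|=\det M\,|\Omega_1|$). The genuine gap is in the first step of your ``only if'' direction. Laminates supported on rank-one connected $A,B$ with $\det A=\det B=\delta$ can at best yield inequalities of the form $\lambda\Psi(A)+(1-\lambda)\Psi(B)\geq\min\{\Psi(C):\det C=\delta\}$, and such inequalities do \emph{not} upgrade to ``$\Psi$ is constant on level sets of $\det$'': for instance $\Psi(A)=|A|^2$ satisfies them, since $\lambda|A|^2+(1-\lambda)|B|^2\geq|\lambda A+(1-\lambda)B|^2\geq n\big(\det(\lambda A+(1-\lambda)B)\big)^{2/n}=n\delta^{2/n}=\min_{\det C=\delta}|C|^2$, yet it is not a function of the determinant. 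With $A=B$ your inequality reduces to $\Psi(A)\geq\min$, which is vacuous; constancy would require the reverse bound $\Psi(A)\leq\min_{\det C=\delta}\Psi(C)$, and no oscillating competitor delivers it, because a map whose gradient is essentially the constant $A$ on most of $\Omega_1$ cannot have image exactly $M\Omega_1$ unless $A$ and $M$ differ by a symmetry of the domain, and the boundary correction you defer to the end carries exactly the energy you would need to control. In short, energy comparisons against laminates only ever produce quasiconvexity/rank-one-convexity--type necessary conditions, which many non-fluid energies satisfy; they are blind to the mechanism that actually forces $\Psi=H\circ\det$.

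That mechanism is the \emph{first variation with sliding at the boundary}, which is how the paper argues: since $y(x)=Mx$ minimizes among maps required only to preserve the image set, outer variations sliding tangentially along $\partial(M\Omega_1)$ are admissible, and stationarity forces the tangential traction to vanish, i.e. $D\Psi(M)M^T=p(M)\1$ for every $M\in M^{n\times n}_+$ (varying $\Omega_1$ gives all normal directions). This pointwise condition on $M^{n\times n}_+$ characterises elastic fluids, whence $\Psi(M)=H(\det M)$; convexity of $H$ then follows since quasiconvexity of $H\circ\det$ is equivalent to convexity of $H$. Your second step (slab competitors giving $\mu H(\delta_1)+(1-\mu)H(\delta_2)\geq H(\mu\delta_1+(1-\mu)\delta_2)$) is a workable substitute for that last point once $\Psi=H\circ\det$ is established, but the heart of the theorem is the stationarity/boundary-traction argument, which your proposal does not contain.
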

{\it Sketch of proof.}
If $y=Mx$ is a minimizer, then we can construct a variation that slides at the boundary, so that the tangential component at the boundary of the `Cauchy stress' is zero,
i.e. 
$$D\Psi(M)M^T=p(M)\1,$$
for a scalar $p(M)$,
from which it follows that $\Psi$ corresponds to an elastic fluid, i.e. $\Psi(M)=H(\det M)$. But then $H(\det M)$ is quasiconvex, and so $H$ is convex.

Conversely, if $H$ is convex then 
\be \av_{\Omega_1}H(\det Dy(x))\,dx&\geq &H\left(\av_{\Omega_1}\det Dy(x)\,dx\right)\\&=&H(\det M).\ee

\section{Discussion}
\label{discussion}
 Theorem \ref{exthm} gives conditions under which a minimizer $y^*$ of 
 $I_{P_1,P_2}$ in $\mathcal A$ exists, but says nothing about the regularity properties of $y^*$. In the simpler problem of isotropic nonlinear elasticity essentially nothing is known. In particular it is an open question whether  minimizers  are smooth, or smooth outside some closed set of zero measure,  or even if the usual weak form of the Euler-Lagrange equation holds (though some forms of the Euler-Lagrange equation can be established \cite{p31}). The presence of the (possibly discontinuous) lower order terms due to the intensity maps makes the problem for $I_{P_1,P_2}$ even more challenging.
 
 Theorem \ref{generalM} shows that if the desirable property holds that for linearly related images $y^*$ is the corresponding linear map, then $\psi$ depends on $Dy$ only through $\det Dy$, that is only on local volume changes, so that in particular the hypothesis (H2) of Theorem \ref{exthm} does not hold. This suggests that a better model might be to minimize a functional such as 
 \be
 \label{second}
 E_{P_1,P_2}(y)=\int_{\om_1}\left(\psi(c_1(x),c_2(y(x)), \det Dy(x))+ |D^2y(x)|^2\right)\,dx,
 \ee
for which existence of a minimizer can be proved for low dimensions $n$, and for which minimizers of linearly related images could be proved under suitable hypotheses to be linear. This idea is explored in \cite{ballhornermaths}.

We remark that it is straightforward to prove variants of Theorem \ref{exthm} (a) for the case when $y$ is required to map a finite number of landmark points in $\om_1$ to corresponding points in $\om_2$ (see e.g. \cite{linetal2011}), and (b) for the case when $P_1$ is a template image that is to be compared to an unknown part of $P_2$ (such as in image registration). In the case (b), for example, one can minimize 
\be
\label{reg}
I(y)=\int_{\om_1}\psi(c_1(x), c_2(y(x)), Dy(x))\,dx
\ee
subject to the constraint that $y:\om_1\to\om$ is  a homeomorphism for some (unknown) subdomain $\Omega=a+\lambda R\Omega_1\subset\Omega_2$, where $a\in \R^n$, $R\in SO(n)$, $\alpha\leq\lambda\leq\beta$ and $0<\alpha<\beta$ are given. Here we consider the case when the unknown part of $P_2$ is to be compared to a rigid transformation, rotation and uniform magnification of the template, but one can equally handle the case of more general affine transformations, which may be appropriate for images viewed in perspective.  Such variants are also explored in \cite{ballhornermaths}.

Of course this work needs to be supplemented with appropriate numerical experiments on images. The numerical minimization of integrals such as \eqref{0} is not straightforward even without the presence of the (possibly discontinuous) intensity functions, and the fact that the minimization is to be carried out in the admissible set $\mathcal A$ of homeomorphisms, rather than, say, maps satisfying Dirichlet boundary conditions, presents additional difficulties. From a rigorous perspective, the numerical method should take into account the  possible occurrence of the Lavrentiev phenomenon, whereby the infimum of the energy in $\mathcal A$ might be strictly less than the infimum among Lipschitz maps in $\mathcal A$ (such as those generated by a finite-element scheme). For discussions see 
\cite{p29,baili2007,balcietal2022}. 

\subsubsection{Acknowledgements} We are grateful to Alexander  Belyaev, Jos\'e Iglesias, David Mumford, Ozan \"{O}ktem, Martin Rumpf, Carola Schonlieb and  Benedikt Wirth for their interest and helpful suggestions. CLH was supported by EPSRC through grant EP/L016508/1. 

 \bibliographystyle{splncs04}
 

\end{document}